\newcommand{\Z}{\mathbb{Z}}
\newcommand{\HH}{\mathbb{H}}
\newcommand{\E}{\mathbb{E}}
\newcommand{\N}{\mathbb{N}}
\newcommand{\R}{\mathbb{R}}
\newcommand{\eps}{\varepsilon}
\begin{document}
\title[FPP on a hyperbolic graph]{First Passage percolation on a hyperbolic graph  admits bi-infinite geodesics}

\author{Itai Benjamini}
\address{The Weizmann Institute, Rehovot, Israel}

\email{itai.benjamini@gmail.com}

\author[Tessera]{Romain Tessera*} \address{Universit\'e Paris Cit\'e, Sorbonne Universit\'e, CNRS, IMJ-PRG, F-75013
Paris, France}\email{romtessera@gmail.com}
\thanks{{$*$} Supported in part by ANR project  ANR-24-CE40-3137}

\date{\today}
\subjclass[2010]{82B43, 51F99, 97K50}
\keywords{First passage percolation, two-sided geodesics, hyperbolic graph, Morse geodesics.}

\baselineskip=16pt

\begin{abstract}
Given an infinite connected graph, a way to randomly perturb its metric is to assign random i.i.d.\ lengths to the edges. An open question attributed to Furstenberg is whether there exists a bi-infinite geodesic in first passage percolation on $\Z^2$, and more generally on $\Z^n$ for $n\geq 2$. Although the answer is generally conjectured to be negative, we give a positive answer for graphs satisfying some negative curvature assumption. Assuming only strict positivity and finite exponential moment for the random lengths, we prove that if a graph $X$ has bounded degree  and  contains a Morse geodesic (e.g.\ is non-elementary Gromov hyperbolic), then almost surely, there exists a bi-infinite geodesic in first passage percolation on $X$.  \end{abstract}

\maketitle





\newtheorem*{thmm}{Theorem}
\newtheorem*{propp}{Proposition}
\newtheorem*{corr}{Corollary}
\newtheorem*{Ack}{Acknowledgements}

\newtheorem{ThmIntro}{Theorem}
\newtheorem{CorIntro}[ThmIntro]{Corollary}
\newtheorem{PropIntro}[ThmIntro]{Proposition}

\newtheorem{thm}{Theorem}[section]
\newtheorem{cor}[thm]{Corollary}
\newtheorem{lem}[thm]{Lemma}
\newtheorem{clai}[thm]{Claim}
\newtheorem{prop}[thm]{Proposition}
\theoremstyle{definition}
\newtheorem{defn}[thm]{Definition}
\newtheorem{cla}{Claim}
\newtheorem{nota}[thm]{Notation}
\newtheorem{que}[thm]{Question}
\newtheorem{conj}[thm]{Conjecture}
\theoremstyle{remark}
\newtheorem{rem}[thm]{Remark}

\newtheorem{ex}[thm]{Example}
\newtheorem{exer}[thm]{Exercise}
\numberwithin{equation}{section}

\section{Introduction}

First passage percolation is a model of random perturbation of a given geometry.
In this paper, we shall restrict to the simplest model, where random i.i.d lengths are assigned to the edges of a fixed graph. We refer to  \cite{ADH,GK,Ke} for background and references.

Let us briefly recall how FPP is defined.
We consider a connected non-oriented graph $X$, whose set of vertices (resp.\ edges) is denoted by $V$ (resp.\ $E$).
For every function $\omega:E\to (0,\infty)$, we equip $V$ with the weighted graph metric $d_{\omega}$, where each edge $e$ has weight $\omega(e)$. In other words, for every $v_1,v_2\in V$, $d_{\omega}(v_1,v_2)$ is defined as the infimum over all path $\gamma=(e_1,\ldots, e_m)$ joining $v_1$ to $v_2$ of $|\gamma|_{\omega}:=\sum_{i=1}^m\omega(e_i)$. Observe that the graph metric on $V$ corresponds to the case where $\omega$ is constant equal to $1$, we shall simply denote it by $d$.
We will now consider a probability measure on the set of all weight functions $\omega$. We let $\nu$ be a probability measure supported on $[0,\infty)$. Our model consists in choosing independently at random the weights $\omega(e)$ according to $\nu$. More formally, we equip the space $\Omega=[0,\infty)^E$ with the product probability that we denote by $P$.

A famous open problem in percolation theory is whether with positive probability, first passage percolation on $\Z^2$ admits a bi-infinite geodesic.
In his Saint-Flour course from 84', Kesten attributes this question to Furstenberg (see \cite{Ke}).
Licea and Newman \cite{LN} made partial progress on this problem, which is still open, and mentioned that the conjecture that there are no such geodesics arose independently  in the physics community studying spin glass.
Wehr and   Woo \cite{WW} proved  absence of two sided infinite geodesic in a half plane, assuming the lengths distribution is continuously distributed with a finite mean.

For Riemannian manifolds, the existence  of bi-infinite geodesics  is influenced by  the curvature of the space.
It is well-known that complete simply connected non-positively curved Riemannian manifolds (such as the Euclidean space $\R^n$ or the hyperbolic space $\HH^n$) admit bi-infinite geodesics. Therefore, simply connected manifolds without two-sided geodesics must have positively curved regions. It is easy to come up with examples of complete Riemannian surfaces with bubble-like structures that create short cuts avoiding larger and larger balls  around some origin. See \cite{Ba} for background.

 To help the reader's intuition, let us roughly describe a similar example in the  graph setting.
Starting with the standard Cayley graph of $\Z^2$, it is not difficult to choose edges lengths among the two possible values $1/10$ and $1$, such that the resulting weighted graph has no bi-infinite geodesics. To do so, consider a sequence of squares $C_n$ centered at the origin, whose size grows faster than any exponential sequence (e.g.\ like $n^n$). Then attribute length $1/10$ to the edges along $C_n$ for all $n$, and $1$ to all other edges. This creates large ``bubbles"  with relatively small neck in the graph (which in a sense can be interpreted as  large positively curved regions). One easily checks for all $n$, for  every pair of points at large enough distance from the origin, any geodesic between them never enters $C_n$ (as it is more efficient to go around the shorter edges of its boundary, than traveling inside it). Random triangulations admit bubbles and indeed no two sided
infinite geodesics.  Geodesics go via ``mountain passes" these are random  analogous of the Morse geodesics defined below. See \cite{CL} for the study of FPP on random triangulations.

The Euclidean plane being flat, its discrete counterpart $\Z^2$ (and more generally $\Z^d$ for $d\geq 2$) is in some sense at criticality for the question of existence of bi-infinite geodesics in FPP. Therefore, one should expect that in presence of negative curvature,  FPP a.s.\ exhibits  bi-infinite geodesics. For instance, this should apply to FPP  on Cayley graphs of groups acting properly cocompactly by isometries on the hyperbolic space $\HH^d$. We will see that this is indeed the case.

Let us first introduce some notation.
Let $X$ be a simple graph, with no double edges. Recall that a path $\gamma=(e_1,\ldots, e_n)$ between two vertices $x,y$ is a sequence of consecutive edges joining $x$ to $y$. We denote  $(x=\gamma(0),\ldots,\gamma(n)=y)$ the set of vertices such that for all $0\leq i<n$, $\gamma(i)$ and $\gamma(i+1)$ are joined by the edge $e_{i+1}$. For all $i<j$, we shall also denote by $\gamma([i,j])$ the subpath $(e_{i+1},\ldots,e_j)$ joining $\gamma(i)$ to $\gamma(j)$.
Similarly, we define infinite paths indexed by $\N$ (resp.\ bi-infinite paths indexed by $\Z$).

\begin{defn}
Let $X$ be an infinite connected  graph, and let $C\geq 1$ and $K\geq 0$. A path $\gamma$ of length $n$ between two vertices $x$ and $y$ is called a $(C,K)$-quasi-geodesic finite path if for all $0<i<j\leq n$, $$j-i\left(=\left|\gamma[i,j]\right|\right)\leq Cd(\gamma(i),\gamma(j))+K.$$
Similarly, we define $(C,K)$-quasi-geodesic infinite (or bi-infinite) paths. An infinite (or a bi-infinite) path will simply be called a quasi-geodesic if it is $(C,K)$-quasi-geodesic for some constants $C$ and $K$.
\end{defn}
\begin{defn}
A bi-infinite path $\gamma$ in $X$ is called a {\em Morse quasi-geodesic} (resp.\ Morse geodesic) if it is a quasi-geodesic (resp.\ a geodesic) and if it satisfies the so-called Morse property: for all $C\geq 1$ and $K>0$, there exists $R$ such that every $(C,K)$-quasi-geodesic joining two points of $\gamma$ remains inside the $R$-neighborhood of $\gamma$.
\end{defn}
\begin{rem}\label{rem:quasi} It is well-known and easy to deduce from its definition that in a locally finite weighted graph, whose weights are bounded away from 0, a Morse quasi-geodesic always lies at
a bounded distance from a (Morse) bi-infinite geodesic. For the sake of completeness, let us outline the argument: let $\gamma$ be a Morse quasi-geodesic, and let $u_n$ and $v_n$ be two sequences of vertices on $\gamma$ escaping to infinity in opposite directions. For every $n$, let $\gamma_n$ be a geodesic segment joining $u_n$ to $v_n$. By the Morse property, $\gamma_n$ remains at uniformly bounded distance from $\gamma$. On the other hand, the condition on the degree and on the weights ensure that any ball of finite radius in $X$ is finite.   
Hence one can extract a sequence $\gamma_{i_n}$ that converges to some bi-infinite geodesic $\gamma_0$.  \end{rem}

\begin{thm}\label{thm:Main}
Let $X$ be an infinite connected graph with bounded degree, that contains a Morse quasi-geodesic $\gamma$. Assume $\E e^{c\omega_e}<\infty$ for some 
$c>0$ and $\nu(\{0\})=0$. Then for a.e.\ $\omega$, $X_{\omega}$ admits a bi-infinite geodesic. Moreover for every sequence of pairs of vertices $(x_n,y_n)$ going to infinity on opposite sides of $\gamma$, for a.e.\ $\omega$ there exists a finite  subset $A\subset X$ such that the $\omega$-geodesic between $x_n$ and $y_n$ crosses $A$.

\end{thm}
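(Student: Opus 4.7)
The plan is to use the Morse property of $\gamma$ to funnel every $\omega$-geodesic between points on opposite sides of $\gamma$ through a fixed finite region $A \subset X$, and then extract a bi-infinite $\omega$-geodesic by a compactness argument.

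First I would establish an a.s.\ comparability between $d_\omega$ and the simplicial metric $d$. For the upper bound, the SLLN applied to the i.i.d.\ sequence of weights along $\gamma$ gives $|\gamma([-n,n])|_\omega \leq C \cdot n$ a.s.\ for $n$ large, with analogous bounds for a predetermined countable family of reference paths anchored on $\gamma$. For the lower bound, using $\nu(\{0\})=0$, choose $\delta>0$ with $p_0 := \nu([0,\delta])$ small enough that $4 D^2 p_0 < 1$, where $D$ is the maximum degree. A Chernoff estimate gives $P(\mathrm{Bin}(n,p_0)\geq n/2) \leq (4p_0)^{n/2}$, and since there are at most $D^n$ simple paths of length $n$ rooted at a given vertex, Borel--Cantelli yields that a.s.\ for $n$ large, every simple path of length $n$ rooted at a vertex of $\gamma$ has $\omega$-length at least $(\delta/2) n$. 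Combining, for a.e.\ $\omega$ there exist finite constants $C(\omega), K(\omega)$ such that every $\omega$-geodesic $\sigma$ suitably anchored to $\gamma$ is a $(C,K)$-quasi-geodesic for $d$.

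Fix $o \in \gamma$, and let $R = R(\omega)$ be the Morse radius of $\gamma$ associated to the constants $(C(\omega), K(\omega))$; set $A := B_R(o)$, which is finite by bounded degree. Given $(x_n, y_n)$ on opposite sides of $\gamma$ going to infinity, choose approximate nearest-point projections $p_n, q_n \in \gamma$, which by the opposite-sides hypothesis tend to the two ends of $\gamma$. Concatenating simplicial geodesics $[p_n, x_n]$ and $[y_n, q_n]$ with the $\omega$-geodesic $\sigma_n$ produces a path $\tau_n$ from $p_n$ to $q_n$ that is a quasi-geodesic with controlled constants (the two attached segments are genuine geodesics, and $\sigma_n$ is quasi-geodesic by Step~1). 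The Morse property confines $\tau_n$ to the $R$-neighborhood of $\gamma$, and since $p_n, q_n$ lie on opposite sides of $o$ along $\gamma$, $\tau_n$ must enter $B_R(o) = A$. For $n$ large, the attached segments $[p_n, x_n]$ and $[y_n, q_n]$ lie entirely outside $A$, so $\sigma_n$ itself meets $A$, proving the crossing statement.

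For the bi-infinite geodesic, pigeonhole produces a vertex $a \in A$ visited by infinitely many $\sigma_n$; reparametrize so that $\sigma_n(0) = a$. Since $\sigma_n$ is quasi-geodesic and $d(x_n, a), d(y_n, a) \to \infty$, both one-sided pieces of $\sigma_n$ at $a$ have simplicial length tending to infinity. Bounded degree makes the $k$-balls in $X$ finite, so a diagonal extraction yields a bi-infinite pointwise limit $\sigma_\infty$ through $a$; every finite subsegment of $\sigma_\infty$ is the pointwise limit of $\omega$-geodesics between its endpoints and hence itself $\omega$-minimizing, so $\sigma_\infty$ is a bi-infinite $\omega$-geodesic. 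The main obstacle I anticipate is Step~1: the lower bound $|\sigma|_\omega \geq c|\sigma|$ is easy for paths rooted in a countable set (Borel--Cantelli along vertices of $\gamma$), but must be upgraded to $\omega$-geodesics $\sigma_n$ originating at arbitrary $x_n$. I would handle this by reshaping $\sigma_n$ via the projections $p_n, q_n$ on $\gamma$ before invoking the lower bound. A secondary point is making the opposite-sides hypothesis precise enough that the projections $p_n, q_n$ provably tend to distinct ends of $\gamma$ with $d(x_n, p_n), d(y_n, q_n)$ controlled.
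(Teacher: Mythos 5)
Your overall architecture (funnel the $\omega$-geodesics through a fixed ball around $o$ using the Morse property, then extract a bi-infinite geodesic by compactness and a diagonal argument) matches the paper's, and your Step 3 is essentially the paper's concluding compactness argument. But Step 1, on which everything else rests, has a genuine gap: the claim that a.s.\ every $\omega$-geodesic anchored to $\gamma$ is a $(C(\omega),K(\omega))$-quasi-geodesic for the simplicial metric $d$ cannot be established under the stated hypotheses, and is in fact false when $\nu$ has unbounded support (which is allowed: only $\E\omega_e<\infty$ is assumed). To verify the quasi-geodesic inequality $j-i\leq C\,d(\sigma(i),\sigma(j))+K$ for an arbitrary subsegment of an $\omega$-geodesic you need the upper bound $d_\omega(u,v)\leq C'd(u,v)+K'$ for \emph{arbitrary} pairs $u,v$ in the graph, not just for endpoints of a predetermined countable family of paths anchored on $\gamma$. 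For an unbounded distribution this uniform comparison fails almost surely: with positive probability every edge in a fixed ball $B(u,k)$ is heavier than any prescribed $T$ except along one designated cheap detour of length $L$, so such configurations occur somewhere a.s.\ for every $T$ and $L$, producing adjacent vertices whose $\omega$-geodesic has combinatorial length $\geq L$. The paper explicitly flags the bounded-support case (where $X_\omega$ is bi-Lipschitz to $X$ and your argument does work) as ``essentially obvious''; the content of the theorem is precisely the unbounded case, which your Step 1 does not reach. A secondary, fixable issue: your Borel--Cantelli union bound is over paths rooted at the (infinitely many) vertices of $\gamma$, so for fixed $n$ the bound is not summable as stated; one must restrict to paths whose distance to $o$ is at most their length, as in Lemma \ref{lem:upperboundpath}.

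The paper circumvents the unboundedness problem by never asserting that $\omega$-geodesics are $d$-quasi-geodesics. Instead it uses the Dru\c{t}u--Mozes--Sapir divergence characterization of Morse quasi-geodesics (Proposition \ref{prop:DMS}), which requires only a linear bound on the \emph{total} combinatorial length of a connecting path rather than the subsegment condition, and runs a proof by contradiction: if $\gamma_\omega^n$ avoided a large ball around $o$, it would contain a long subsegment staying far from $\gamma_0$; by Lemma \ref{lem:MQG} that subsegment has enormous combinatorial length, hence by the lower bound of Lemma \ref{lem:upperboundpath} enormous $\omega$-length, contradicting minimality against the reroute along $\gamma_0$ itself, whose $\omega$-length is controlled by the law of large numbers (Lemma \ref{lem:LLN}) because $\gamma_0$ is a single predetermined path. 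If you want to salvage your approach, you would need to replace ``$\sigma_n$ is a quasi-geodesic'' by this weaker total-length estimate (which does follow from your lower bound plus the SLLN along $\gamma$) and then reproduce the contradiction argument on the far-away subsegment; at that point you have essentially rederived the paper's proof.
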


That is, if the underling graph admits a two sided infinite Morse geodesic in the graph metric, then  the random FPP metric will have a two sided infinite geodesic a.s.

Very recently Ahlberg and Hoffman \cite{AH}
made substantial progress on the structure of geodesic rays for FPP on $\Z^2$   solving the midpoint problem (from \cite{BKS}) which is related to the non existence of bi-infinite geodesic.
They showed that the probability a shortest path between $(-n,0)$ and $(n, 0)$ will go via $(0,0)$ is going to $0$ with $n$.  Their upper bound on the probability is going very slowly to $0$, and is  likely far from the truth. The theorem above shows that the probability of the midpoint event is not going to $0$ in the distance, along a Morse geodesic.
This can be useful in proving linear variance, see the conjecture at the end.

We briefly recall the definition of a hyperbolic graph. A geodesic triangle in a graph $X$ consists of a triplet of vertices
$x_0,x_1,x_2\in V$, and of geodesic paths $\gamma_0,\gamma_1,\gamma_2$ such that $\gamma_i$ joins $x_{i+1}$ to $x_{i+2}$ where $i\in \Z/3\Z$. Given $\delta\geq 0$, a geodesic triangle is called $\delta$-thin if for every $i\in \Z/3\Z$, every vertex $v_i$ on $\gamma_i$ lies at distance at most $\delta$ from either $\gamma_{i+1}$ or $\gamma_{i+2}$.
Said informally, a geodesic triangle is $\delta$-thin if every side is contained in the $\delta$-neighborhood of the other two sides. A graph is hyperbolic if there is $\delta < \infty$ so that all geodesic triangles are $\delta$-thin.
It is well-known \cite{Gr} that in a hyperbolic graph, any bi-infinite quasi-geodesic is Morse.
In particular, we deduce the following

\begin{cor}
Let $X$ be a hyperbolic graph with bounded degree containing at least one bi-infinite geodesic. Assume $\E e^{c\omega_e}<\infty$ for some 
$c>0$  and $\nu(\{0\})=0$. Then for a.e.\ $\omega$, $X_{\omega}$ admits a bi-infinite geodesic. Moreover for every sequence of pairs of vertices $(x_n,y_n)$ going to infinity on opposite sides of a fixed quasi-geodesic in the graph metric of $X$, for a.e.\ $\omega$ there exists a finite  subset $A\subset X$ such that the $\omega$-geodesic between $x_n$ and $y_n$ crosses $A$.
\end{cor}

Note that the case where $\nu$ is supported in an interval $[a,b]\subset (0,\infty)$ is essentially obvious. Indeed, for all $\omega$, the weighed graph $X_{\omega}$ is bi-Lipschitz equivalent to $X$.  We deduce that a Morse quasi-geodesic in $X$ remains a Morse quasi-geodesic in $X_{\omega}$ (adapting the definition to weighted graphs), and therefore lies at bounded distance from an actual bi-infinite geodesic.

We finish this introduction mentioning that Theorem \ref{thm:Main} applies to a wide class of Cayley graphs, including Cayley graphs of relatively hyperbolic groups, Mapping Class groups, and so on.
\section{Preliminary lemmas}
We start with a useful characterization of Morse quasi-geodesics, that one may take as a definition.
\begin{prop}\label{prop:DMS}\cite[Proposition 3.24 (3)]{DMS}
A bi-infinite quasi-geodesic $\gamma_0$ is Morse if and only if the following holds. For every $C\geq 1$, there exists $D\geq 0$  such that every path of length $\leq Cn$ connecting two
points $x,y$ on $\gamma$ at distance $\geq n$ crosses the $D$-neighborhood of the middle third of the segment of $\gamma_0$ joining $x$ to $y$.
\end{prop}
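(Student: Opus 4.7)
The plan is to prove the equivalence in the two natural directions, exploiting the Morse property as a coarse-Lipschitz control on projection to $\gamma_0$ in one direction, and as a qualitative consequence of the middle-third condition in the other.

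For the forward direction, I would fix $C\geq 1$ and a path $\sigma$ of length $\leq Dn$ between points $x,y\in\gamma_0$ at distance $\geq n$, and proceed in two steps. First I would show $\sigma$ lies in an $R$-neighborhood of $\gamma_0$ for some $R$ depending only on $D$ and the Morse constants: discretize $\sigma$ at a carefully chosen scale, replace each discrete segment by a geodesic, and produce a quasi-geodesic from $x$ to $y$ whose parameters depend only on $D$; the Morse hypothesis applied to this quasi-geodesic yields the neighborhood bound, which transfers back to $\sigma$ up to an additive constant. Second, once $\sigma$ lies in the $R$-neighborhood, I would define a closest-point projection $\pi:\sigma\to\gamma_0$, show that $\pi$ moves by a bounded amount $B$ between consecutive vertices of $\sigma$ (this coarse continuity is the strong contracting property of Morse subsets, a known consequence of the Morse hypothesis), and apply a discrete intermediate-value argument: since $\pi(\sigma)$ traverses from near $x$ to near $y$ in jumps of size $\leq B$, it must enter every subsegment of $\gamma_0[x,y]$ of length $>B$, in particular the middle third. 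The corresponding point of $\sigma$ then lies within $B+R$ of the middle third, so taking $D\geq B+R$ closes the argument.

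For the converse, let $\sigma$ be a $(C,K)$-quasi-geodesic joining $x,y\in\gamma_0$, of length at most $Cn+K\leq 2Cn$ for $n$ past a threshold. Applying the middle-third hypothesis (with constant $2C$) produces a point $z_1\in\sigma$ within $D$ of some $m_1$ in the middle third of $\gamma_0[x,y]$. This splits $\sigma$ into two $(C,K)$-quasi-geodesic subpaths whose free endpoints lie within $D$ of $\gamma_0$; after adjusting each such endpoint to its projection on $\gamma_0$ at bounded cost in length, I would iterate the hypothesis on each subpath. Because the middle third lies in $[n/3,2n/3]$ along $\gamma_0[x,y]$, each iteration shrinks the length of the involved subpath by a definite ratio, and after $O(\log n)$ steps every remaining subpath has bounded length and hence lies $O(1)$-close to $\gamma_0$. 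Stitching these local bounds together yields a uniform bound, depending only on $C$, $K$, and $D$, on the distance from any point of $\sigma$ to $\gamma_0$, which is precisely the Morse property.

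The main technical obstacle I anticipate is the coarse-continuity step in the forward direction: in a general graph, closest-point projection onto a quasi-geodesic can jump wildly, and it is precisely the Morse hypothesis that makes it coarsely Lipschitz through the strong contracting property. Making this quantitative, and tracking the accumulated additive error through the $O(\log n)$ iterations of the converse without letting it blow up, are where the real work lies; the discrete intermediate-value argument and the splitting into subpaths are then essentially bookkeeping.
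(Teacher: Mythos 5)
The paper offers no proof of this statement: it is quoted directly from Drutu--Mozes--Sapir \cite[Proposition 3.24(3)]{DMS}, so your attempt must be judged on its own. Your converse direction (middle-third condition $\Rightarrow$ Morse) is essentially sound, and your worry about error accumulation can be dispelled: every piece produced by the subdivision is a genuine subpath of the original $(C,K)$-quasi-geodesic with at most one appended segment of length $\leq D$ at each end, so the constants fed into the hypothesis stay uniformly bounded throughout the $O(\log n)$ levels, and the geometric shrinking of the $\gamma_0$-segments finishes the argument.

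The forward direction, however, has a genuine gap: its first step is false. A path $\sigma$ of length $\leq Dn$ joining $x,y\in\gamma_0$ with $d(x,y)\geq n$ need not stay in any neighborhood of $\gamma_0$ of radius depending only on $D$ and the Morse data. Take $X$ a regular tree or a net in $\mathbb{H}^2$ and $\gamma_0$ a geodesic line (which is Morse): the path that leaves $x$ away from $\gamma_0$ for distance $n$ and then returns geodesically to $y$ has length at most $3n+O(1)$ but reaches distance $n$ from $\gamma_0$, with $n$ arbitrarily large. The same example defeats the proposed repair, because a bound on the \emph{total} length of a path relative to $d(x,y)$ says nothing about its subsegments: the excursion above contains two points at bounded distance from each other separated by a subpath of length comparable to $n$, so no discretization or geodesic-replacement turns it into a quasi-geodesic with constants depending only on $D$, and the Morse hypothesis cannot be applied to it. Note that the conclusion you must prove is much weaker than what you attempt: the path only has to \emph{touch} the $D$-neighborhood of the middle third, not remain near $\gamma_0$; a correct argument goes by contradiction, extracting from a putative path avoiding a huge neighborhood of the middle third (together with pieces of $\gamma_0$ and connecting geodesics) a quasi-geodesic with \emph{uniformly} bounded constants that violates the Morse property --- this is the route taken in \cite{DMS}. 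Finally, the appeal to the ``strong contracting property'' of Morse sets is not available at this level of generality: in arbitrary graphs Morse implies only a sublinear contraction of closest-point projection, not the bounded-jump coarse continuity you invoke, though that step is moot once the first step fails.
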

We deduce the following criterion, which we shall use in the sequel.
\begin{lem} \label{lem:MQG}
Let $X$ be an infinite connected graph with bounded degree. Assume $\gamma_0$ is a Morse quasi-geodesic. Then there exists an increasing function $\phi:\R_+\to \R_+$ such that $\lim_{t\to \infty}\phi(t)=\infty$, and with the following property.  Assume
\begin{itemize}
\item $x,y$ belong to $\gamma_0$;
\item $x'$ and $y'$ are vertices such that $d(x,x')=d(y,y')=R$, and $d(x',y')\geq 10R$;
\item $\gamma$ is  a path joining $x'$ to $y'$, and remains outside of the $R$-neighborhood of $\gamma_0$.
  \end{itemize}
  Then
 $$|\gamma|\geq \phi(R)d(x,y).$$
\end{lem}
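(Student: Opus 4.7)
The plan is to apply Proposition~\ref{prop:DMS} after closing $\gamma$ into a path between two points of $\gamma_0$. First, form $\tilde\gamma$ by prepending a shortest $x$-to-$x'$ path and appending a shortest $y'$-to-$y$ path to $\gamma$. Then $\tilde\gamma$ joins $x$ to $y$ (both lying on $\gamma_0$) and has length $|\tilde\gamma|=|\gamma|+2R$, and by the triangle inequality $n:=d(x,y)\ge d(x',y')-2R\ge 8R$.

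Next, for each parameter $C\ge 1$, Proposition~\ref{prop:DMS} supplies a constant $D=D(C)$ such that every path of length $\le Cn$ from $x$ to $y$ must enter the $D$-neighborhood of the middle third $\mu$ of the $\gamma_0$-segment from $x$ to $y$. The key observation is that, provided $R$ exceeds a fixed multiple of $D$, this cannot happen for $\tilde\gamma$: the prefix and the suffix lie within distance $R$ of $\{x,y\}$, whereas $\mu$ is at graph distance at least $c\,n\ge 8cR$ from $x$ and from $y$ (where $c>0$ depends only on the quasi-geodesic constants of $\gamma_0$), so the $D$-neighborhood of $\mu$ is disjoint from both the prefix and the suffix; and $\gamma$ itself lies outside the $R$-neighborhood of $\gamma_0$, hence a fortiori outside the $D$-neighborhood of $\mu$ as soon as $D<R$. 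Proposition~\ref{prop:DMS} then forces $|\tilde\gamma|>Cn$, and therefore $|\gamma|>Cn-2R\ge(C-\tfrac14)d(x,y)$.

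Finally, one defines the required function by $\phi(R):=\sup\{C-\tfrac14 : C\ge 1,\ c\,D(C)<R\}$ and replaces it by its nondecreasing envelope (adding a tiny correction such as $R/(R+1)$ if strict monotonicity is desired). Since $D(C)$ is finite for each $C$, for any fixed $C$ one has $c\,D(C)<R$ as soon as $R$ is large enough, so $\phi(R)\to\infty$ as $R\to\infty$.

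I do not anticipate a serious obstacle here: the whole substance is Proposition~\ref{prop:DMS}, and what remains is bookkeeping, namely (i)~converting path-distance along the quasi-geodesic $\gamma_0$ into a linear-in-$n$ graph distance lower bound for $d(\mu,\{x,y\})$, which follows directly from the quasi-geodesic constants of $\gamma_0$ applied to its middle third, and (ii)~absorbing the additive error $-2R$ into a fixed fraction of $d(x,y)$, which is available precisely because the hypothesis $d(x',y')\ge 10R$ yields $d(x,y)\ge 8R$.
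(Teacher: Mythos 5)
Your argument is correct and is essentially the paper's own proof: the paper also forms the concatenation of $\gamma$ with geodesics from $x$ to $x'$ and from $y'$ to $y$ and invokes Proposition~\ref{prop:DMS}, only phrased as a proof by contradiction rather than extracting $\phi$ explicitly as a supremum over admissible $C$. Your write-up is in fact more detailed than the paper's one-line version (you check that the prefix, the suffix and $\gamma$ each miss the $D$-neighbourhood of the middle third); the one point to watch, shared with the paper's own proof, is that for a $(\lambda,k)$-quasi-geodesic the middle third is only guaranteed to lie at distance roughly $n/(3\lambda)$ from $x$ and $y$, so the disjointness of the prefix from that $D$-neighbourhood needs $8/(3\lambda)>1$ (automatic when $\gamma_0$ is a genuine geodesic, which is the case the main theorem reduces to) or a corresponding adjustment of the constant $10$ in the hypothesis.
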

\begin{proof}
Assume by contradiction that there exists a constant $C>0$, and for every $n$, an integer $R\geq n$, vertices $x,x',y,y'$ and a path $\gamma$ of length $\leq Cd(x,y)$ such that $d(x,x')=d(y,y')=R$, $d(x',y')\geq 10R$, and $\gamma$ avoids the $R$-neighborhood of $\gamma_0$. By choosing $n$ large enough, we can assume that $R>D$. Applying Proposition \ref{prop:DMS} to the path obtained by concatenating $\gamma$ with geodesics from $x'$ to $x$ and from $y'$ to $y$ yields the desired contradiction.
\end{proof}

Our assumption $\nu(\{0\})=0$ is used to prove the following two lemmas.

\begin{lem}
Let $X$ be an infinite connected graph with bounded degree and assume that  $\nu(\{0\})=0$.
There exists an increasing function $\alpha:(0,\infty)\to (0,1]$ such that $\lim_{t\to 0}\alpha(t)=0$, and such that for all finite path $\gamma$ and all $\eps>0$,
$$
P\left(|\gamma|_{\omega}\leq \eps|\gamma|\right)\leq \alpha(\eps)^{|\gamma|}.
$$
\end{lem}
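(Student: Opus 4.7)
The plan is to turn the event $\{|\gamma|_\omega \le \eps|\gamma|\}$ into a statement about how many edges of $\gamma$ receive a small weight and then exploit independence. Set $m := |\gamma|$ and, using $\nu(\{0\})=0$, introduce $p(t) := \nu([0,t])$; by continuity of measure from above, $p$ is non-decreasing with $p(t)\to 0$ as $t\to 0^+$. The key pigeonhole observation is that if $|\gamma|_\omega \le \eps m$, then at least $m/2$ of the edges $e_1,\ldots,e_m$ of $\gamma$ must satisfy $\omega(e_i)\le 2\eps$: otherwise more than $m/2$ of them would carry weight exceeding $2\eps$, which alone would force $|\gamma|_\omega > \eps m$.

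Assuming $\gamma$ is self-avoiding so that $(\omega(e_i))_{i=1}^m$ is i.i.d.\ with law $\nu$, a union bound over the possible sets of positions where the small-weight event occurs yields
$$P\bigl(|\gamma|_\omega \le \eps m\bigr) \le \sum_{k=\lceil m/2\rceil}^{m} \binom{m}{k}\, p(2\eps)^k \le 2^m p(2\eps)^{m/2} = \bigl(2\sqrt{p(2\eps)}\,\bigr)^{m}.$$
Setting $\alpha(\eps) := \min\bigl(1,\, 2\sqrt{p(2\eps)}\,\bigr)$ (with a tiny positive perturbation added if one insists on strict positivity) defines a non-decreasing $(0,1]$-valued function on $(0,\infty)$ with $\alpha(\eps)\to 0$ as $\eps\to 0$, and delivers the desired estimate $P(|\gamma|_\omega \le \eps m) \le \alpha(\eps)^m$.

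No step of this is really hard: it is a short combination of independence and a binomial tail bound. The only genuine subtlety is the possibility of a non-self-avoiding $\gamma$, for which the variables $\omega(e_i)$ indexed by position of the path cease to be independent; this is nevertheless harmless for the paper, since in the applications the lemma will be used on paths competing with $\omega$-geodesics, and these may be taken self-avoiding without loss. A cleaner variant would instead optimize the Chernoff-type bound $P(|\gamma|_\omega\le\eps m)\le e^{\lambda\eps m}\phi(\lambda)^m$, where $\phi(\lambda):=\E[e^{-\lambda\omega_e}]<1$ tends to $0$ as $\lambda\to\infty$ because $\omega_e>0$ almost surely, but the elementary counting estimate above is already enough.
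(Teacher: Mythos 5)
Your proof is correct and follows essentially the same route as the paper's: both arguments reduce the event $\{|\gamma|_\omega\leq\eps|\gamma|\}$ to the statement that a definite fraction of the edges must carry small weight, and then combine independence with a binomial/union bound (the paper fixes a threshold $\delta$ with $\nu([0,\delta])<\lambda$ and lets the fraction be $1-\eps/\delta$, estimating the count via Stirling; you fix the fraction $1/2$ and the threshold $2\eps$, which is a slightly cleaner bookkeeping of the same idea). Your caveat about non-self-avoiding paths applies equally to the paper's own proof and is indeed harmless, since the lemma is only ever invoked on paths that may be taken edge-distinct.
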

\begin{proof}
The assumption implies that for all $\lambda>0$,  there exists $\delta>0$ such that $\nu([0,0+\delta])<\lambda$.
Let $\gamma$ be a path of length $n$. Assume that $|\gamma|_{\omega}\leq \eps |\gamma|$, and let $N$  be the number of edges of $\gamma$ with $\omega$-length $\geq \delta $.  It follows that
$$\delta N \leq \eps n,$$
so we deduce that $N\leq \eps n/\delta$.
This imposes that  at least $(1-\eps/\delta)n$ edges of $\gamma$ have $\omega$-length $\leq \delta$.
Recall that by Stirling's formula, given some $0<\alpha<1$, the number of ways to choose $\alpha n$ edges in a path of length n is  $$\sim \frac{n^n}{(\alpha n)^{\alpha n}((1-\alpha) n)^{(1-\alpha) n}}= (1/\alpha)^{\alpha n}(1/(1-\alpha)^{(1-\alpha)n}.$$
Thus the probability that $\gamma$  has $\omega$-length at most $ \eps n$ is  less than a universal constant times
$$\frac{\lambda^{(1-\eps/\delta)n}}{(\eps/\delta)^{(\eps/\delta)n}(1-\eps/\delta)^{(1-\eps/\delta)n}}=\left(\frac{\lambda^{1-\eps/\delta}}{(\eps/\delta)^{\eps/\delta}(1-\eps/\delta)^{1-\eps/\delta}}\right)^{n}.$$
Note that $$\lim_{\eps\to 0}\frac{\lambda^{1-\eps/\delta}}{(\eps/\delta)^{\eps/\delta}(1-\eps/\delta)^{1-\eps/\delta}}=\lambda.$$
In other words, we have proved that for all $\lambda>0$, there exists $\eps>0$ such that
$$P\left(|\gamma|_{\omega}\leq \eps|\gamma|\right)\leq (2\lambda)^{|\gamma|}$$
which is equivalent to the statement of the lemma.
\end{proof}
\begin{lem}\label{lem:upperboundpath}
Let $X$ be an infinite connected graph with bounded degree, and let $o$ be some vertex of $X$. Assume $\nu(\{0\})=0$. Then there exists $c>0$ such that for a.e.\ $\omega$, there exists $r_1=r_1(\omega)$ such that for all finite path $\gamma$ such that\footnote{Here $d(\gamma,o)$ denotes the distance between $o$ and the set of vertices $\{\gamma(0),\gamma(1),\ldots \}$.} $d(\gamma,o)\leq |\gamma|$, one has
$$|\gamma|_{\omega}\geq c|\gamma|-r_1.$$
\end{lem}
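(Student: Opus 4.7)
The plan is a standard first-moment Borel--Cantelli argument, combining the per-path exponential tail bound of the previous lemma with a counting bound on paths afforded by the bounded degree assumption.

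First I would estimate, for each $n\geq 1$, the number $N_n$ of paths $\gamma$ of length exactly $n$ satisfying $d(\gamma,o)\leq n$. If $D$ denotes the maximum degree of $X$, then any such path lies entirely in the ball $B(o,2n)$ (since some vertex of $\gamma$ is within distance $n$ of $o$ and all other vertices lie within graph-distance $n$ of that vertex). Hence there are at most $|B(o,2n)|\leq C_0 D^{2n}$ choices for the starting vertex, and at most $D^n$ choices for the remaining vertices of the path, giving $N_n\leq C_0 D^{3n}$.

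Next, I would use the previous lemma to select the constant $c>0$. Since $\alpha(t)\to 0$ as $t\to 0$, I can pick $c>0$ small enough that $\alpha(c)D^{3}\leq 1/2$. Then for every fixed candidate path $\gamma$ of length $n$,
$$P\!\left(|\gamma|_{\omega}\leq c|\gamma|\right)\leq \alpha(c)^{n},$$
and a union bound over the (at most) $N_n$ such paths gives
$$P\!\left(\exists\,\gamma\text{ of length }n\text{ with }d(\gamma,o)\leq n\text{ and }|\gamma|_{\omega}\leq cn\right)\leq C_0\bigl(D^{3}\alpha(c)\bigr)^{n}\leq C_0\, 2^{-n}.$$

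Since $\sum_n C_0 2^{-n}<\infty$, Borel--Cantelli yields that almost surely there exists a random integer $n_0(\omega)$ such that no path of length $n>n_0(\omega)$ with $d(\gamma,o)\leq n$ satisfies $|\gamma|_{\omega}\leq cn$. Setting $r_1(\omega):=c\,n_0(\omega)$ then gives $|\gamma|_{\omega}\geq c|\gamma|-r_1$ for every admissible $\gamma$: the inequality holds by construction when $|\gamma|>n_0$, and is trivial when $|\gamma|\leq n_0$ since the right-hand side is then nonpositive while $|\gamma|_{\omega}\geq 0$. I do not foresee any real obstacle; the only delicate point is ensuring that the same choice of $c$ makes the tail bound $\alpha(c)^n$ beat the exponential path count $D^{3n}$, which is exactly what $\lim_{t\to 0}\alpha(t)=0$ in the previous lemma allows.
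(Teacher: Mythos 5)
Your proof is correct and follows essentially the same route as the paper: count the paths of length $n$ at distance at most $n$ from $o$ (exponentially many, thanks to bounded degree), choose $c$ so that the per-path bound $\alpha(c)^n$ from the previous lemma beats this exponential count, and conclude by a union bound and Borel--Cantelli. Your explicit handling of the random threshold via $r_1(\omega)=c\,n_0(\omega)$ is a clean way to finish, matching the paper's intended conclusion.
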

\begin{proof} Let  $q$ be an upper bound on the degree of $X$, and let $n\geq 1$ be some integer. Every path of length $n$ lying at distance at most $n$ from $o$ is such that $d(o,\gamma(0))\leq 2n$, hence such a path is determined by a vertex in the ball $B(o,2n)$, whose size is at most $q^{2n}+1$, and a path of length $n$ originated from this vertex. Therefore
the number of such paths is at most $(q+1)^{3n}$.

On the other hand, we deduce from the previous lemma that for $c>0$ small enough, the probability that there exists some path $\gamma$ of length $n$, and at distance at most $n$ from $o$, and satisfying $|\gamma|_{\omega}\leq c|\gamma|$ is less than $1/(q+1)^{4n}$.
Hence the lemma follows by Borel-Cantelli.
\end{proof}

So far we have not used the exponential moment condition. The proof of Lemma \ref{lem:Tala} below relies on it, via a celebrated concentration inequality due to Talagrand, which we first recall.

\begin{thmm}\cite[Proposition 8.3]{Talagrand}).
Suppose that  $\E \exp(c\omega(e))<\infty$. Then there exists $C_1$ and $C_2$ such that for every pair of vertices $x,y$, and for every $u\geq 0$,
\begin{equation}\label{eq:expmoment}
P\left(|d_{\omega}(x,y)-\E d(x,y)|\geq u\right)\leq C_1\exp\left(-C_2\min\left\{\frac{u^2}{d(x,y)},u\right\}\right).
\end{equation}
\end{thmm}

\begin{lem}\label{lem:Tala}
 There exists $K>0$ such that for a.e.\ $\omega$, there exists $r_2$ such that for every vertex $y$, 
\[d_\omega(o,y)\leq  Kd(o,y) + r_2.\]
\end{lem}
\begin{proof}
We first fix $r\geq 1$ and some constant $K$. We start bounding the probability that there exists a vertex $B(o,Kr)$ which is at $\omega$-distance more than $K r$ from $o$. We will then pick $K$ large enough for our purpose. 

By triangular inequality, $\E d_\omega(x,y)\leq bd(x,y)$, where $b=\E \omega_e$. We deduce from Talagrand's inequality that
\begin{equation}\label{eq:expmomentSecond}
P\left(d_{\omega}(x,y)\geq bd(x,y)+u\right)\leq C_1\exp\left(-C_2\min\left\{\frac{u^2}{d(x,y)},u\right\}\right).
\end{equation}
Assume that $K-b\geq 1$ and apply (\ref{eq:expmomentSecond}) with $u=(K-b) r$, $x=o$, and $y$ a vertex in $B(o,r)$. This yields
\begin{equation}\label{eq:expmomentThird}
P\left(d_{\omega}(o,y)\geq Kr\right)\leq C_1\exp\left(-C_2(K-b)r\right\}.
\end{equation}
Indeed,  the condition that $d_{\omega}(o,y)\geq Kr$ can be written as $d_{\omega}(o,y)\geq br+ (K-b)r$, which implies $d_{\omega}(o,y)\geq bd(o,y)+ (K-b)r$. Besides, $u^2/d(o,y)\geq u^2/r\geq u$ since $(K-b)^2\geq K-b$.

 Let  $q$ be an upper bound on the degree of $X$, and choose $K$ such that $\delta:=C_2(K-b)-\log q >0$. By union bound (considering all points in $B(o,r))$), we get that the probability that there exists a vertex $y \in B(o,r)$ such that $d_\omega(o,y)\geq Kr$ is at most 
$C_1\exp\left(-C_2(K-b)r\right) q^r\leq C_1\exp\left(-\delta r\right)$. This is summable over $r$, so the conclusion follows by  applying Borel Cantelli lemma. \end{proof}

\section{Proof of Theorem \ref{thm:Main}}
Let $\gamma_0$ be some Morse quasi-geodesic. By Remark \ref{rem:quasi}, we do not loose generality by assuming that our Morse quasi-geodesic  $\gamma_0$ is a bi-infinite geodesic of the graph $X$. We let $o= \gamma_0(0)$ be some vertex.
We consider two sequences of vertices $(x_n)$ and $(y_n)$ on $\gamma_0$ escaping to infinity in opposite directions.

We let $\Omega'\subset \Omega$ be a measurable subset of full measure such that the conclusions of Lemmas \ref{lem:upperboundpath} and  \ref{lem:Tala}  hold.
For all $n$ and for all $\omega$, we pick measurably an $\omega$-geodesic $\gamma_{\omega}^n$ between $x_n$ and $y_n$. Note that Lemmas  \ref{lem:upperboundpath} and \ref{lem:Tala} imply that such a geodesic exists: by Lemma \ref{lem:Tala}, we have that $d_{\omega}(x_n,y_n)$ is finite, and by Lemma \ref{lem:upperboundpath}, paths of length $\geq M$ have $\omega$-length going to infinity as $M\to \infty$.

If we can prove the second part of Theorem \ref{thm:Main}, i.e.\ that for all $\omega\in \Omega'$, there exists a constant $R_{\omega}>0$ such that for all $n$, $d(\gamma_{\omega}^n,o)\leq R_{\omega}$,
then the first part of Theorem \ref{thm:Main} follows by a straightforward compactness argument\footnote{Indeed, observe that Lemma \ref{lem:upperboundpath} implies that $X_{\omega}$ is a.e.\ locally finite in the sense that $d_{\omega}$-bounded subsets of vertices are finite.}.
So we shall assume by contradiction that for some $\omega\in \Omega'$, there exists a sequence $R_n$ going to infinity such that $\gamma_{\omega}^n$ avoids $B(o,100R_n)$.

\begin{lem}
Assuming the above, there exist integers $q<p$ such that
$$d(\gamma_{\omega}^n(p),\gamma_0)=d(\gamma_{\omega}^n(q),\gamma_0)=R_n,$$
and such that for all $p\leq k\leq q$,
$$d(\gamma_{\omega}^n(k),\gamma_0)\geq R_n,$$
and
$$d(\gamma_{\omega}^n(p),\gamma_{\omega}^n(q))\geq 10R_n.$$
\end{lem}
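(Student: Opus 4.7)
The plan is to locate an excursion of $\gamma_\omega^n$ away from $\gamma_0$ whose two endpoints project onto $\gamma_0$ on opposite sides of $o$ and far from it. Parametrize $\gamma_0$ by $\Z$ with $\gamma_0(0)=o$, fix a nearest-point projection $\pi:X\to\gamma_0$, and write $f(v)\in\Z$ for the index of $\pi(v)$. Without loss of generality the coordinates satisfy $f(x_n)\geq 100R_n$ and $f(y_n)\leq-100R_n$ (since $x_n$ and $y_n$ are on opposite sides of $o$ along $\gamma_0$ and both lie outside $B(o,100R_n)$ by the assumption on $\gamma_\omega^n$).

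The crucial elementary observation I would establish first is that whenever $d(\gamma_\omega^n(k),\gamma_0)\leq R_n$, one has $|f(\gamma_\omega^n(k))|\geq 99R_n$. Indeed, by the triangle inequality
$$d(\pi(\gamma_\omega^n(k)),o)\;\geq\;d(\gamma_\omega^n(k),o)-d(\gamma_\omega^n(k),\pi(\gamma_\omega^n(k)))\;\geq\;100R_n-R_n\;=\;99R_n,$$
using that $\gamma_\omega^n$ avoids $B(o,100R_n)$. In particular, among the indices $k$ where $\gamma_\omega^n(k)$ lies in the $R_n$-neighborhood of $\gamma_0$, the projection is always far from $o$, sitting on one definite ``side'' of $o$.

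Next, define $p$ to be the largest $k$ with $d(\gamma_\omega^n(k),\gamma_0)\leq R_n$ and $f(\gamma_\omega^n(k))\geq 99R_n$ (this set contains $k=0$), and $q$ to be the smallest $k>p$ with $d(\gamma_\omega^n(k),\gamma_0)\leq R_n$ and $f(\gamma_\omega^n(k))\leq-99R_n$ (this set contains $k=N$, the terminal index, since $f(y_n)\leq-100R_n$). For $p<k<q$, if the path were in the $R_n$-neighborhood of $\gamma_0$ then by the observation $|f(\gamma_\omega^n(k))|\geq 99R_n$; neither sign is compatible with the maximality of $p$ nor the minimality of $q$, so $d(\gamma_\omega^n(k),\gamma_0)>R_n$ throughout. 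Because consecutive vertices of $\gamma_\omega^n$ are at graph distance $1$, the distance to $\gamma_0$ can change by at most $1$ per step, which forces $d(\gamma_\omega^n(p),\gamma_0)=d(\gamma_\omega^n(q),\gamma_0)=R_n$ exactly.

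It remains to check the separation. Since $f(\gamma_\omega^n(p))\geq 99R_n$ and $f(\gamma_\omega^n(q))\leq-99R_n$ and $\gamma_0$ is a geodesic, the projections satisfy $d(\pi(\gamma_\omega^n(p)),\pi(\gamma_\omega^n(q)))\geq 198R_n$. One final triangle inequality gives
$$d(\gamma_\omega^n(p),\gamma_\omega^n(q))\;\geq\;198R_n-2R_n\;=\;196R_n\;\geq\;10R_n,$$
as required. I do not expect a serious obstacle: once the projection is introduced and one notices that avoidance of $B(o,100R_n)$ pushes nearby-projections at least $99R_n$ from $o$, the whole argument is just bookkeeping, and it does not yet use the Morse property (that will come later, via Lemma~\ref{lem:MQG}).
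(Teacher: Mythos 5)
Your proof is correct, and it reaches the same conclusion by a noticeably different selection of the indices $p$ and $q$. The paper first locates an auxiliary time $r$ — the first moment the geodesic is at distance exactly $100R_n$ from the far ray $\gamma_0([j,\infty))$ — shows $d(\gamma_\omega^n(r),\gamma_0)\geq 50R_n$ by splitting $\gamma_0$ into three pieces, and then takes $p$ and $q$ to be the last time before $r$ and the first time after $r$ at which the distance to $\gamma_0$ equals $R_n$; the fact that the two projections land on opposite sides of $o$ is then asserted essentially from the picture (``since $x$ and $y$ lie on both sides of $o$''). You instead fix a nearest-point projection $\pi$ onto $\gamma_0$, observe that avoidance of $B(o,100R_n)$ forces any vertex within $R_n$ of $\gamma_0$ to project at least $99R_n$ from $o$, and define $p$ as the last $R_n$-close time projecting to the $x_n$-side and $q$ as the first subsequent $R_n$-close time projecting to the $y_n$-side. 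This builds the opposite-sidedness directly into the definition, so the step the paper leaves to the reader becomes automatic; the price is that you get $d(\gamma_\omega^n(p),\gamma_0)\leq R_n$ rather than $=R_n$ a priori, which you repair with the $1$-Lipschitz property of $d(\cdot,\gamma_0)$ along the path. Two micro-points you should make explicit: that repair needs $p+1<q$, which follows from your own final estimate $d(\gamma_\omega^n(p),\gamma_\omega^n(q))\geq 196R_n>1$ (valid already under $d\leq R_n$); and, as in the paper, the exact equality $d=R_n$ tacitly assumes $R_n$ is an integer. Neither affects the argument, and the exact value $R_n$ versus $\leq R_n$ is immaterial for how the lemma is used afterwards.
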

\begin{proof}
(Note that this is obvious from a picture).
Let $\gamma_0(i)$ and $\gamma_0(j)$ with $i<0<j$ be the two points at distance $100R_n$ from $o=\gamma_0(0)$. Since  $\gamma_0$ is a geodesic, $\gamma_0((\infty,i])$ and $\gamma_0([j,\infty))$ are distance $200R_n$ from one another.  Let $r$ be the first time integer such that $d(\gamma_{\omega}^n(r), \gamma_0([j,\infty)))=100R_n$. By triangular inequality, $d(\gamma_{\omega}^n(r), \gamma_0((\infty,i]))\geq100R_n$, and since we also have $d(\gamma_{\omega}^n(r), o)\geq 100R_n$, we deduce that $$d(\gamma_{\omega}^n(r), \gamma_0)\geq 50R_n.$$

We let $p$ and $q$ be respectively the largest integer $\leq r$ and the smallest integer $\geq r$ such that
$$d(\gamma_{\omega}^n(p),\gamma_0)=d(\gamma_{\omega}^n(q),\gamma_0)=R_n.$$
Clearly, for all $p\leq k\leq q$,
$$d(\gamma_{\omega}^n(k),\gamma_0)\geq R_n.$$
Moreover, recall that $\gamma_{\omega}^n$ avoids $B(o,100R_n)$. Hence if $x$ and $y$ are points on $\gamma_0$ such that $d(\gamma_{\omega}^n(p),x)=d(\gamma_{\omega}^n(q),y)=R_n$, we deduce by triangular inequality that
$d(x,o)\geq 99R_n$ and $d(y,o)\geq 99R_n$. But since $x$ and $y$ lie on both sides of $o$, this implies that $d(x,y)\geq 198R_n$ (because $\gamma_0$ is a geodesic). Now by triangular inequality, we conclude that
$$d(\gamma_{\omega}^n(p),\gamma_{\omega}^n(q))\geq d(x,y)-2R_n \geq 196R_n\geq 10R_n.$$
So the lemma follows.
\end{proof}

\

\noindent{\bf End of the proof of Theorem \ref{thm:Main}}

We now let $i$ and $j$ be integers such that
$$d(\gamma_{\omega}^n(p),\gamma_0(i))=d(\gamma_{\omega}^n(q),\gamma_0(j))=R_n.$$
Note that by triangular inequality, $j-i=|\gamma_0([i,j])|\geq R_n$.

By Lemmas \ref{lem:upperboundpath} and  \ref{lem:MQG}, we have
\begin{eqnarray*}
|\gamma_{\omega}^n([p,q])|_{\omega} & \geq & c|\gamma_{\omega}^n([p,q])|-r_1\\
                                                                 & \geq & c\phi(R_n)|\gamma_0([i,j])|-r_1\\
                                                                 & = & c\phi(R_n)(j-i)-r_1.
\end{eqnarray*}
Note that $\gamma_{\omega}^n(p)$ and  $\gamma_{\omega}^n(q)$ both belong to $B(o,j-i+R_n)$. Hence by Lemma \ref{lem:Tala}, their $\omega$-distance to $o$ is at most $K(j-i+R_n)+r_2$.
Besides, since $\gamma_{\omega}^n$ is an $\omega$-geodesic between $x_n$ and $y_n$, we have
\begin{eqnarray*}
|\gamma_{\omega}^n([p,q])|_{\omega} & \leq & d_\omega(\gamma_{\omega}^n(p),o)+d_\omega(o,\gamma_{\omega}^n(q))\\
                                                                 & \leq & 2K(j-i+R_n)+2r_2.
 \end{eqnarray*}
Gathering these inequalities, we obtain 
\[ c\phi(R_n)(j-i)-r_1\leq 2K(j-i+R_n)+2r_2.\]
Gathering the terms proportional to $j-i$, we get
\[ (c\phi(R_n)-2K)(j-i)\leq 2KR_n+2r_2+r_1.\]
Now using that $j-i\geq R_n$, this gives
\[ (c\phi(R_n)-2K)R_n\leq 2KR_n+2r_2+r_1,\]
which yields a contradiction since $\phi(R_n)\to \infty$ as $n\to \infty$.
This ends the proof of Theorem \ref{thm:Main}.


\section{Remarks and questions}

\begin{itemize}

\item (Cayley graphs) We do not know a single example of an infinite Cayley graph, for which FPP a.s.\ admits no bi-infinite geodesics  (to fix the ideas, assume the edge length distribution is supported on the interval $[1,2]$).
\item (Adding dependence)
Given a hyperbolic Cayley graph, rather than considering independent edges lengths it is natural to consider other group invariant distributions. Under which natural conditions (mixing?) on this distribution do bi-infinite geodesics  a.s. exists?

 \item (Poisson Voronoi and other random models)
A variant of random metric perturbation is obtained via Poisson Voronoi tiling of a measure metric space. It seems likely that our method of proof  applies to the hyperbolic Poisson Voronoi tiling, see \cite{BPP}.  Recently other versions of  random hyperbolic triangulations were constructed,  \cite{AR} \cite{C}. Since those are not obtained by perturbing an underling  hyperbolic space, our proof does not apply to this setting.

\item(Variance along Morse geodesics)
We  {\em conjecture} that under a suitable moment condition on the edge-length distribution, the variance of the random distance grows linearly along the  Morse quasi-geodesic, unlike in Euclidean lattices \cite{BKS}. For lengths which are bounded away from zero and infinity, Morse's property ensures that geodesics remain at uniformly bounded distance from $\gamma$, hence reducing the problem to "filiform graphs", i.e.\  graphs quasi-isometric to $\Z$. A (very) special class of filiform graphs is dealt with in \cite{A} where a linear variance growth is proven.

\end{itemize}

\bigskip
\footnotesize

\end{document}